\newcommand{\Rastar}[0]{~~\mbox{\raisebox{-.05ex}{$\stackrel{*}{\Longrightarrow}$}}~~}
\def\bw{{\bf w}}
\def\bbw{{\bf \bar w}}
\def\mxr{\text{maxr}}
\def\cH{{\mathcal H}}
\def\nn{\nonumber}
\def\a{\alpha} \def\b{\beta} \def\d{\delta} \def\D{\Delta}
\def\e{\varepsilon}  \def\F{{\Phi}}  \def\g{\gamma}
\def\G{\Gamma}  
\def\z{\zeta} \def\th{\theta}    
\def\r{\rho}   
\def\t{\tau} \def\om{\omega}
\def\cP{{\cal P}}
\def\cQ{{\cal Q}}
\newtheorem{theorem}{Theorem}
\newtheorem{lemma}[theorem]{Lemma}
\newcommand{\wh}[1]{\widehat{#1}}
\newcommand{\brac}[1]{\left(#1\right)}
\newcommand{\bfrac}[2]{\left(\frac{#1}{#2}\right)}
\def\cE{{\cal E}}
\newcommand{\set}[1]{\left\{#1\right\}}
\def\sm{\setminus}
\def\es{\emptyset}
\def\E{\mathbb{E}}
\def\Pr{\mathbb{P}}
\def\cF{{\cal F}}
\newcommand{\ignore}[1]{}
\def\cA{{\mathfrak A}}
\def\cB{{\mathfrak B}}
\def\cC{{\mathfrak M}}
\def\cD{{\mathfrak D}}
\def\cE{{\mathfrak N}}
\def\cF{{\mathfrak F}}
\def\cH{{\mathcal H}}
\def\cK{{\mathcal K}}
\def\cP{{\mathfrak P}}
\def\cQ{{\mathfrak Q}}
\def\cR{{\mathfrak R}}
\def\cS{{\mathcal S}}
\def\cT{{\mathcal T}}
\def\cV{{\mathfrak V}}
\def\cW{{\mathcal W}}
\def\cX{{\mathfrak X}}
\def\cY{{\mathcal Y}}
\def\cZ{{\mathcal Z}}
\newcommand{\card}[1]{\left|#1\right|}
\newcommand{\beq}[2]{\begin{equation}\label{#1}#2\end{equation}}
\newcommand{\mults}[1]{\begin{multline*}#1\end{multline*}}
\def\nn{\nonumber}
\def\cG{\mathcal{F}}
\DeclareMathOperator{\Bin}{Bin}
\DeclareMathOperator{\Hyp}{Hyp}
\begin{document}

\title{The threshold for loose Hamilton cycles in random hypergraphs}
\author{Alan Frieze\thanks{Department of Mathematical Sciences, Carnegie Mellon University, Pittsburgh PA, USA, 15213. Research supported in part by NSF grant DMS1952285.} and Xavier P\'erez-Gim\'enez\thanks{Department of Mathematics, University of Nebraska-Lincoln, Lincoln NE, USA, 68588. Research supported in part by Simons Foundation Grant \#587019 and by NSF grant DMS2201590.}} 
\maketitle
\date{}
\begin{abstract}
We show that w.h.p.\ the random $r$-uniform hypergraph $H_{n,m}$ contains a loose Hamilton cycle, provided $r\geq 3$ and $m\geq \frac{(1+\epsilon)n\log n}{r}$, where $\epsilon$ is an arbitrary positive constant. This is asymptotically best possible, as if  $m\leq \frac{(1-\epsilon)n\log n}{r}$ then w.h.p.\ $H_{n,m}$ contains isolated vertices.
\end{abstract}
\section{Introduction}\label{intro}
The thresholds for the existence of Hamilton cycles in the random graphs $G_{n,m},G_{n,p}$ have been known for many years, see \cite{AKS},  \cite{Boll} and \cite{KS}. There have been many generalisations of these results over the years and the problem is well understood. It is natural to try to extend these results to hypergraphs.

An $r$-uniform hypergraph is a pair $H=(V,E)$ where $E\subseteq \binom{V}{r}$. Let $H_{n,p}$ be the binomial random $r$-uniform hypergraph on $n$ vertices with edge probability $p$. (The uniformity is $r$ thoughout and so we drop $r$ from some of the notation.) Let $H_{n,m}$ be the random $r$-uniform hypergraph on $n$ vertices with $m$ randomly chosen $E_m$ edges from $\binom{V}{r}$. We are also interested in oriented hypergraphs in which each edge has one distinguished vertex called its {\em tail} and $r-1$ {\em heads}. Let $\vec H_{n,p}$ be the oriented counterpart of $H_{n,p}$ in which each one of the $r\binom{n}{r}$ possible oriented edges is picked independently with probability $p$. Note that if we take $\vec H_{n,p}$ and ignore the orientations, then the outcome is distributed as $H_{n,p'}$ with $p'= 1 - (1-p)^r$, so $p'\sim rp$ when $p=o(1)$.

We say that an $r$-uniform sub-hypergraph $C$ of $H$ is a Hamilton cycle of type $\ell$, for some $1\le \ell \le r-1$, if there exists a cyclic ordering of the vertices $V$ such that every edge consists of $r$ consecutive vertices and for every pair of consecutive edges $e_{i-1},e_i$ in $C$ (in the natural ordering of the edges) we have $|e_{i-1}\setminus e_i|=\ell$. When $\ell=r-1$ we say that $C$ is a {\em loose} Hamilton cycle. Frieze \cite{F}, Dudek and Frieze \cite{DF} and Dudek, Frieze, Loh and Speiss \cite{DFLS} established the threshold for the existence of a loose Hamilton cycle in $H_{n,m}$ up to a constant factor. (See also, Ferber \cite{Fe} and Frankston, Kahn, Narayanan and Park \cite{FKNP}.) We will prove the following theorem: 
\begin{theorem}\label{th1}
Let $\e>0$ be an arbitrary small positive constant. If $r\geq 3$ and $(r-1)\mid n$ and $m\geq \frac{(1+\e)n\log n}{r}$ then w.h.p.\ $H_{n,m}$ contains a  loose Hamilton cycle.
\end{theorem}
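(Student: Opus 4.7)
The plan is to prove Theorem~\ref{th1} by combining two-round exposure (``sprinkling'') with a hypergraph adaptation of the P\'osa rotation--extension technique. I would split $m$ into two independent batches of sizes $m_1=(1+\e/2)\frac{n\log n}{r}$ and $m_2=\frac{\e}{2}\cdot\frac{n\log n}{r}$, realising $H_{n,m}\supseteq H_{n,m_1}\cup H_{n,m_2}$ via the standard coupling that makes the two batches independent. The first batch is used to construct a loose Hamilton path together with strong expansion properties, and the second is held in reserve as a fresh pool of $\Theta(n\log n)$ edges to close the path into a cycle. The independence is essential because the closing edges must be independent of whatever rotations and extensions were performed in the first phase.

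Using $H_{n,m_1}$, standard first- and second-moment arguments give, whp, that every vertex has degree at least $(1+\e/4)\log n$ and that certain small ``bad'' sub-hypergraphs do not occur, yielding a P\'osa-type expansion property for loose paths. Starting from a longest loose path $P$ in $H_{n,m_1}$, I apply a hypergraph rotation: if an endpoint $v$ of $P$ cannot be extended by an unused edge, then by maximality it must lie in an edge meeting a linking vertex in the interior of $P$, and rotating at that vertex yields a new loose path with a different endpoint. Iterating these rotations in an alternating fashion grows the set of accessible endpoints to size $\Omega(n/\log n)$, and applying the argument at both ends produces $\Omega(n^2/\log^2 n)$ valid endpoint pairs; if only a near-spanning loose path is obtained, the remaining vertices are first absorbed using absorber gadgets planted in $H_{n,m_1}$. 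Finally, $H_{n,m_2}$ is used to close the path: each potential closing hyperedge is present with probability $\Theta(n^{1-r})$, and with $\Theta(n\log n)$ fresh edges available to close $\Omega(n^2/\log^2 n)$ pairs, the expected number of actual closings is $\omega(1)$, and a standard second-moment (or Janson-type) argument converts this to a whp statement.

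The principal obstacle is correctly adapting P\'osa rotation to the loose hypergraph setting. Unlike in graphs, where rotation is a simple edge-swap, a loose hypergraph path alternates between ``linking'' vertices (shared between consecutive edges) and ``internal'' vertices belonging to a single edge, and a rotation must preserve this intersection pattern while producing a legitimately new endpoint. Keeping the combinatorial bookkeeping honest -- tracking which role each vertex plays after each rotation -- is the delicate step, and is the place where one would almost certainly pass to the oriented model $\vec H_{n,p}$ so that each edge carries a designated ``tail'' which can play the role of the outgoing linking vertex. A secondary obstacle is that, in order to match the tight constant $1+\e$ of the isolated-vertex threshold, the degree and expansion estimates leave essentially no slack: the rotation analysis must be sharp at first order, so one cannot rely on the looser moment arguments that sufficed in the earlier works \cite{F,DF,DFLS}.
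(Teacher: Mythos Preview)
Your approach is entirely different from the paper's and has a genuine gap at precisely the step you yourself flag as the ``principal obstacle''. No adaptation of P\'osa rotation--extension to loose hypergraph paths at the sharp threshold is known, and your proposal does not supply the missing mechanism. The difficulty is structural, not mere bookkeeping: reversing a segment of a loose path does not in general produce a loose path, because the alternating pattern of linking and internal vertices is destroyed unless the pivoting edge meets the path at a linking vertex in exactly the right way, and there is no reason such edges should be plentiful enough at density $p\sim p_*$ to drive P\'osa-type exponential growth of the endpoint set. Passing to $\vec H_{n,p}$ only records which vertex of an edge is the tail; it does not manufacture legal rotations. There is also a parity issue you do not address: when $(r-1)\mid n$ and $r\ge 3$, a spanning loose \emph{path} does not exist (a loose path with $k$ edges covers $k(r-1)+1$ vertices), so before closing you must in fact hold a loose path on $n-(r-2)$ vertices together with a specific set of $r-2$ leftover vertices, and the closing edge is then a \emph{single prescribed} $r$-set rather than one of $\Theta(n^{r-2})$ candidates. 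This makes the closing calculation far more delicate than your sketch suggests, and the ``$\Omega(n^2/\log^2 n)$ endpoint pairs times $\Theta(n^{1-r})$'' arithmetic does not apply.

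For contrast, the paper bypasses rotation altogether. It couples $H_{n,p}$ so as to contain $\rho=\lfloor\rho(r)\rfloor+1$ independent copies of the $d^*$-out hypergraph $\hat H_{n,d^*}$ (Theorem~\ref{Hout}), and then modifies Kahn's entropy proof of Shamir's problem to show that each such copy, after sprinkling, contains an independent uniformly random perfect matching (Theorem~\ref{th5}). The union of $\rho$ independent uniform perfect matchings is contiguous to the random $\rho$-regular $r$-uniform hypergraph $H_{n,\rho}$, to which the threshold result of Altman--Greenhill--Isaev--Ramadurai (Theorem~\ref{lcr}) applies directly. The auxiliary constraint $r\mid n$ coming from the matchings is removed in Section~\ref{div} via a contraction trick of Ferber together with Friedgut's sharp-threshold criterion. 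Thus the sharp constant $(1+\e)$ is inherited from \cite{K1} and Hamiltonicity from \cite{AGIR}; no extension--rotation argument is attempted, and indeed none of the earlier works \cite{F,DF,DFLS,Fe,FKNP} use one either.
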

If $m\leq \frac{(1-\e)n\log n}{r}$ then w.h.p.\ $H_{n,m}$ contains isolated vertices and so this is asymptotically best possible. (The constraint $(r-1)\mid n$ is necessary for the existence of a loose Hamilton cycle. We should also remark that Narayanan and Schacht \cite{NS} obtained the precise threshold for type $\ell$ cycles, except for the case of loose Hamilton cycles.)

We need the following result of Altman, Greenhill,  Isaev and Ramadurai \cite{AGIR}. They proved the following theorem: let $H_{n,d}$ denote a random $d$-regular, $r$-uniform hypergraph on vertex set $[n]$.
\begin{theorem}\label{lcr}
Suppose that $r\mid dn$ and $r-1\mid n$. Then,
\[
\lim_{n\to\infty}\Pr(H_{n,d}\text{ contains a loose Hamilton cycle})=\begin{cases}1&d>\r(r).\\0&d\leq \r(r).\end{cases}
\]
Here $\r=\r(r)$ is the unique real in $(2,\infty)$ such that
\[
(\r-1)(r-1)\bfrac{\r r-d-r}{\r r-\r}^{(r-1)(\r r-\r-r)/r}=1.
\]
\end{theorem}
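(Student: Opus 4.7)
The plan is to prove Theorem \ref{lcr} by the moment method applied to the configuration model for random $d$-regular $r$-uniform hypergraphs, combined with small subgraph conditioning. Each vertex receives $d$ stubs; the $dn$ stubs are paired uniformly at random into groups of $r$ to form edges; conditioning on simplicity then recovers the uniform distribution on $H_{n,d}$. Let $X$ be the number of loose Hamilton cycles in this model. For the lower bound ($d\le\r(r)$) I would compute $\E X$ directly: a loose Hamilton cycle has $n/(r-1)$ edges with $n/(r-1)$ ``junction'' vertices (shared by consecutive edges) and $(r-2)n/(r-1)$ ``interior'' vertices (each appearing in a single edge), and summing over cyclic orderings of the junctions, placements of the interior vertices, and stub matchings consistent with the cycle structure gives $\E X = \lambda(d,r)^{n(1+o(1))}$ for an explicit $\lambda(d,r)$. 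A direct manipulation should identify the equation $\lambda(d,r)=1$ with the defining equation of $\r(r)$ given in the statement, so $\E X\to 0$ in the sub-critical regime and Markov's inequality yields $X=0$ \whp.

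For the upper bound ($d>\r(r)$) I would employ the second moment method together with small subgraph conditioning in the style of Robinson and Wormald. Compute $\E X^2$ by enumerating ordered pairs $(C_1,C_2)$ of loose Hamilton cycles classified by their intersection pattern: shared edges, shared junctions with differing interior vertices, shared interior vertices with differing junction structure, and every combination thereof. The ratio $\E X^2/(\E X)^2$ should converge to a finite constant $C(r)>1$ which decomposes as a product over short substructures --- typically short loose cycles of various lengths --- whose counts in $H_{n,d}$ converge jointly to independent Poisson random variables. The Robinson--Wormald theorem then upgrades the resulting $\Pr(X>0)=\Omega(1)$ to the desired $\Pr(X>0)=1-o(1)$.

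The main obstacle is the second moment calculation. Overlap patterns of two loose Hamilton cycles in a regular $r$-uniform hypergraph are combinatorially much richer than in the graph case: two loose cycles can share entire edges, share junctions without agreeing on the adjacent interior vertices, share interior vertices without agreeing on the enclosing edges, or mix these options in many ways, each carrying a different weight in the sum. Showing that the enumeration collapses to $(1+o(1))C(r)(\E X)^2$ --- and identifying precisely which family of short substructures carries the fluctuations accounting for $C(r)$, so that small subgraph conditioning applies --- is where the bulk of the technical difficulty lies; the analogous step for random regular graphs (Robinson--Wormald, 1994) already required a delicate analysis, and the presence of junctions and interiors in the loose cycle compounds the bookkeeping considerably.
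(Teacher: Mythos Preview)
The paper does not prove Theorem~\ref{lcr} at all: it is quoted verbatim as a result of Altman, Greenhill, Isaev and Ramadurai~\cite{AGIR} and used as a black box in the proof of Theorem~\ref{th1}. So there is no ``paper's own proof'' to compare your proposal against.

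That said, your outline --- configuration model, first-moment computation to locate the threshold $\r(r)$, second moment plus small subgraph conditioning in the Robinson--Wormald style for the supercritical side --- is exactly the template one would expect, and is in fact the method used in~\cite{AGIR}. Your honest acknowledgment that the second-moment calculation is the crux, with the overlap combinatorics of two loose Hamilton cycles being substantially more intricate than in the graph case, is accurate: the AGIR paper runs to roughly sixty pages, the bulk devoted precisely to that variance computation and the identification of the short-cycle Poisson variables needed for conditioning. What you have written is a correct high-level plan but not a proof; filling in the second-moment step is genuinely the entire difficulty and cannot be dismissed as routine bookkeeping.
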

We also need (the proof of) the following theorem of Kahn \cite{K1}:
\begin{theorem}\label{K1}
Let $\e>0$ be an arbitrary positive constant. If $r\mid n$ and $m\geq \frac{(1+\e)n\log n}{r}$ then w.h.p.\ $H_{n,m}$ contains a  perfect matching.
\end{theorem}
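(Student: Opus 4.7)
The plan is to prove Theorem~\ref{K1} by a two-round exposure in the binomial model. Working in $H_{n,p}$ with $p=(1+\e)\log n/\binom{n-1}{r-1}$, so that $p\binom{n}{r}\sim m$, and exploiting the monotonicity of the perfect matching property, the standard coupling between $H_{n,m}$ and $H_{n,p}$ lets us transfer a w.h.p.\ conclusion in the binomial model back to $H_{n,m}$. Split $H_{n,p}=H_1\cup H_2$ into independent pieces, with $H_1$ of density $(1-\e/3)p$ and $H_2$ of the remaining density $\approx\e p/3$. The idea is to use $H_1$ to build an almost-perfect matching and to reserve $H_2$ as a ``boost'' that absorbs the leftover vertices.

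For the first stage, a Chernoff plus union bound shows that w.h.p.\ every vertex has degree close to $(1-\e/3)(1+\e)\log n$ in $H_1$, every vertex has $\Omega(\log n)$ incident edges in $H_2$, and all pair-degrees of $H_{n,p}$ are $O(\log^2 n/n)=o(\log n)$. Thus $H_1$ is near-regular with negligible pair-degrees, and the Pippenger-Spencer / R\"odl nibble applies: it produces a matching $M_1\subseteq H_1$ that misses a set $U\subset V$ of size $|U|\le n^{1-\gamma}$, for some $\gamma=\gamma(r)>0$.

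The second stage, which is where the real work lies, uses $H_2$ to absorb $U$. Before running the nibble, we reserve a random sub-collection of $H_2$ and build an \emph{absorbing matching} $A\subseteq H_2$ on a subset $V(A)\subseteq V$, with the robustness property that for every sufficiently small $U\subseteq V\setminus V(A)$ the set $V(A)\cup U$ admits a perfect matching inside $A\cup H_2[V(A)\cup U]$. Because $H_2$ contributes $\Omega(\log n)$ edges at each vertex, one can arrange that every vertex admits many disjoint ``absorber'' gadgets, and a switching argument then shows that a single $A$ works simultaneously for every admissible $U$. Chaining $A$ with the restriction of $M_1$ to $V\setminus V(A)$ produces a perfect matching of $H_{n,p}$, and Theorem~\ref{K1} follows via the coupling.

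The main obstacle is clearly stage two. At the exact threshold each vertex typically has only $(1+\e)\log n$ incident edges, so the required absorbing gadgets are scarce, and $A$ must absorb \emph{every} admissible leftover $U$ simultaneously rather than a fixed one. Establishing this over all $U$ of size up to $n^{1-\gamma}$ demands the careful counting machinery that forms the technical heart of Kahn's paper~\cite{K1}; the other ingredients (coupling to $H_{n,p}$, Chernoff bounds on degrees and pair-degrees, and the Pippenger-Spencer nibble) are by now routine.
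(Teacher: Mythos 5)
This statement is Kahn's theorem and the paper does not reprove it (it is quoted from \cite{K1}; Section 3 of the paper only outlines and adapts Kahn's argument). Your proposal takes a different route --- nibble plus absorption --- and that route has a genuine gap at exactly the step you defer. At density $p=\Theta(\log n/\binom{n-1}{r-1})$ the absorbing structure you posit essentially does not exist. A standard absorber for a leftover $r$-set $T$ consists of a constant number $a$ of matching edges on a vertex set $W$ with $|W|=ar$, together with $a+1$ edges forming a matching on $W\cup T$; a first-moment count gives an expected number of such gadgets per $T$ of order $n^{ar}p^{2a+1}=n^{ar-(r-1)(2a+1)}(\log n)^{2a+1}=n^{-a(r-2)-(r-1)}\mathrm{polylog}(n)=o(1)$ for every $r\ge 3$. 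So ``every vertex admits many disjoint absorber gadgets'' is false at this density, and no switching argument can be run; this scarcity is precisely why Shamir's problem resisted absorption-style attacks and required the Johansson--Kahn--Vu and Kahn machinery. Your closing appeal to ``the careful counting machinery that forms the technical heart of Kahn's paper'' does not repair this, because Kahn's proof is not an absorption argument at all: it tracks $\log \Phi(\mathcal{H}_t)$, the logarithm of the number of perfect matchings, along a random one-edge-at-a-time deletion process, using entropy estimates and martingale concentration (the events $\cA_t,\cR_t,\cB_t$ described in Section 3). Citing that machinery as a patch for a missing absorbing lemma is a mischaracterization, so the central step of your proof is unsupported.

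Two further, smaller points. First, with degrees only $\Theta(\log n)$ the Pippenger--Spencer/R\"odl nibble does not leave merely $n^{1-\gamma}$ uncovered vertices; the leftover is of order $n/\mathrm{polylog}(n)$ at best, which makes the set to be absorbed far larger than you assume (though even for a single fixed small $U$ the absorber count above already fails). Second, the transfer from $H_{n,p}$ to $H_{n,m}$ by monotonicity and the Chernoff degree/codegree estimates are fine and are indeed routine; the difficulty of the theorem is entirely concentrated in the step your proposal leaves open, so as written the proposal does not constitute a proof.
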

\section{Proof of Theorem \ref{th1}} 
Let $\cK=\binom{[n]}{r}$ denote the edges of the complete $r$-uniform hypergraph with vertex set $[n]$. Let $N=|\cK|=\binom{n}{r}$. Let $\hat H_{n,d}$ be the random (multi)-hypergraph in which every vertex $v$ picks a random set of $d$ edges that contain $v$. Each edge is picked independently of other choices and with replacement, so in principle one vertex could pick the same edge twice or two different vertices may pick the same edge. However, neither of these situations will occur w.h.p~for $d=d^*=\e^2\log n$, which is what we are interested in. We can think of $\hat H_{n,d}$ as a hypergraph generalization of the $d$-out model of random graphs. For convenience, we may sometimes regard $\hat H_{n,d}$ as oriented by making each vertex be the tail of all the edges that it picked.

We will be concerned with $H_{n,p}$ for $p=(1+\e)p_*$ where $p_* = \log n / \binom{n-1}{r-1}$ and $\e>0$ is a small constant. 

\begin{theorem}\label{Hout}
Let $d^*=\e^2\log n$, $p=(1+\e) \log n / \binom{n-1}{r-1}$ and $\r\in\mathbb N$ constant. Then, there is a coupling in which $H_{n,p}$ contains $\r$ independent copies ${\wh \G}_1,{\wh \G}_2,\ldots,{\wh \G}_\r$ of $\hat H_{n,d^*}$ w.h.p.
\end{theorem}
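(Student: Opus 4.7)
I plan to construct the coupling via the oriented model. Choose $q\in(0,1)$ satisfying $(1-q)^r=1-p$, so that $q\sim p/r$; then the underlying unoriented hypergraph of $\vec H_{n,q}$ is exactly distributed as $H_{n,p}$, and this identity is the baseline coupling between the two. For each vertex $v$, let $D_v$ denote the set of unoriented edges $e$ with $v\in e$ such that $(v,e)\in \vec H_{n,q}$; then the collection $(D_v)_{v\in[n]}$ is mutually independent and $|D_v|\sim \Bin(N,q)$, where $N=\binom{n-1}{r-1}$, with mean $qN=(1+\e)(\log n)/r\cdot(1+o(1))$.

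Given the event $\mathcal A:=\{|D_v|\geq \r d^*\text{ for all }v\}$, I build the copies as follows: for each $v$, choose a uniformly random $\r d^*$-subset of $D_v$ and partition it uniformly at random into blocks $T_v^{(1)},\ldots,T_v^{(\r)}$ of size $d^*$; take $T_v^{(i)}$ to be the set of $v$-picks of $\wh \G_i$. A standard ``uniform subset of a uniform subset'' calculation shows that each $T_v^{(i)}$ is unconditionally a uniformly random $d^*$-subset of $\{e\in\cK:v\in e\}$; the picks are independent across $v$ because the $D_v$ are; and although $T_v^{(1)},\ldots,T_v^{(\r)}$ are dependent for fixed $v$ (they form a random partition), the total-variation distance to $\r$ truly independent uniform $d^*$-subsets is $O(\r^2(d^*)^2/N)$ per vertex, which sums to $O(n(\log n)^2/n^{r-1})=o(1)$ across all vertices when $r\geq 3$. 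One further $o(1)$ TV step converts the resulting without-replacement model into the with-replacement $\hat H_{n,d^*}$ defined in the paper.

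The main obstacle will be to establish $\mathcal A$ with probability $1-o(1)$. Since $\E|D_v|\sim(1+\e)(\log n)/r$ and $\r d^*=\r\e^2\log n$, a direct Chernoff lower-tail bound only yields $\Pr(|D_v|<\r d^*)\leq n^{-(1+\e)/r+o(1)}$, which for $r\geq 3$ and small $\e$ is too weak for a naive union bound over $n$ vertices. To overcome this I would use a two-round sprinkling: write $H_{n,p}$ as the independent union of $H_{n,p_1}$ and $H_{n,p_2}$ with $(1-p_1)(1-p_2)=1-p$ and with $p_2$ a small (constant) fraction of $p$; use $\vec H_{n,q_1}$ (the oriented version of $H_{n,p_1}$) to supply the bulk of each $D_v$; and then, for every vertex $v$ whose initial out-degree falls short of $\r d^*$, fill the deficit by reorienting with tail $v$ a uniformly chosen subset of edges of $H_{n,p_2}\cap\{e:v\in e\}\setminus D_v$. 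The marginal of $H_{n,p}$ is preserved by the independence of the two sprinklings, and the feasibility of the repair step w.h.p.\ would follow from Chernoff concentration of the vertex degrees in $H_{n,p_2}$ together with a Hall-type argument showing that the required repair edges can be assigned without conflict between vertices. The resulting block extraction then yields the desired $\r$ copies, with the distributional and independence properties verified exactly as in the second paragraph.
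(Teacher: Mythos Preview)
Your first two paragraphs are essentially sound (for $r\ge 3$, which is all that is needed here). The difficulty is precisely where you locate it, in the repair step, but the gap is not the one your Hall argument addresses.

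The Hall-type argument you sketch handles \emph{feasibility}: w.h.p.\ there are enough edges in $H_{n,p_2}$ incident to each deficient vertex. It does not handle \emph{distribution}. After repair you still need the family $(D_v)_{v\in[n]}$ to be mutually independent, with each $D_v$ (conditioned on its size) a uniform subset of $\{e:v\in e\}$; your second paragraph explicitly relies on this. But for two bad vertices $v,w$, the repair pools $H_{n,p_2}\cap\{e:v\in e\}$ and $H_{n,p_2}\cap\{e:w\in e\}$ overlap on edges containing both $v$ and $w$, so they are dependent; assigning each such edge to only one of $v,w$ via Hall makes the pools disjoint but then non-uniform (edges with two bad endpoints are now systematically excluded from one of the pools). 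A total-variation patch of the form ``ignore edges between bad vertices'' incurs an error of order $(\text{\# bad})\times \rho d^*\times(\text{\# bad})/n$, which with $\text{\# bad}\approx n^{1-(1+\e)/r}$ is $n^{1-2(1+\e)/r+o(1)}$, not $o(1)$ for $r\ge 3$ and small $\e$. Switching the second round to the oriented $\vec H_{n,q_2}$ would restore independence, but then out-degrees have mean only $q_2 N\sim p_2 N/r$, and a union bound over the polynomially many bad vertices again fails. The crux is that you need the \emph{unoriented} degree (mean $p_2 N$) to guarantee enough edges at every bad vertex, but you need \emph{oriented} independence for the distribution---and your proposal does not explain how to get both.

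The paper's proof resolves exactly this tension. It splits the sprinkle into $E'_2$ (unoriented edges with at most one bad vertex, oriented by making that bad vertex the tail) and $\vec E'_3$ (oriented edges with at least two bad vertices, at the lower density $p_3\sim p_2/r$), arranged so that the unoriented $E'_2\cup E'_3$ is still distributed as $H_{n,p_2}$ and independent of the first round. A bad vertex $v$ then samples each of its $\rho d^*$ out-edges uniformly from \emph{all} $\binom{n-1}{r-1}$ possibilities: those landing among edges with no other bad vertex are realised inside $E'_2$ (enough such edges by the unoriented-degree bound), while the rare ones landing among edges with another bad vertex are realised inside $\vec E'_3$ via a short binomial-domination coupling. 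Because $\vec E'_3$ is oriented, these latter picks are independent across bad vertices, and this is precisely the piece your sketch is missing.
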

\begin{proof}
We will consider three types of edges: $1$, $2$ and $3$. Edges of types $1$ and $3$ are oriented, i.e.~they have a distinguished vertex called tail and $r-1$ heads. Edges of type $2$ are not oriented. $\vec E_1, E_2, \vec E_3$ will be auxiliary random hypergraphs on $[n]$ with edges of types $1,2,3$, respectively, and built independently of each other. $\vec E_1$ is built by picking each one of the $r\binom{n}{r}$ possible oriented edges with probability $p_1=1 - \left(1-\frac{\e}{2}p_*\right)^{1/r} \sim \frac{\e}{2r}p_*$ independently from other choices. So $\vec E_1$ is distributed as $\vec H_{n,p_1}$.
$E_2$ is distributed like $H_{n,p_2}$ (unoriented) with $p_2=(1+\e/2)p_*$.
Finally, $\vec E_3$ is distributed as $\vec H_{n,p_3}$ with $(1-p_3)^r=1-p_2$, so $p_3\sim p_2/r$. 
Note that if $E_i$ ($i\in\{1,3\}$) is obtained from $\vec E_1,\vec E_3$ by forgetting the orientation of the vertices, then it is distributed as $H_{n,\tilde p_i}$, where $\tilde p_i = 1-(1-p_i)^r \sim rp_i$ (more precisely, $\tilde p_1=\frac{\e}{2}p_*$ and $\tilde p_3 = p_2 =(1+\e/2)p_*$). Therefore, there is no hope of embedding $E_1\cup E_2\cup E_3$ inside of $H_{n,p}$ since each one of $E_2$ and $E_3$ have almost the same number of edges as $H_{n,p}$, so the union contains almost twice the needed edges. So our approach will be to consider all edges of type~1 together with some of type~2 and some of type~3 in a way that we can build the desired coupling.

We generate $\vec E_1$ as the union of $\rho$ independent copies $\vec E^{(1)}_1,\ldots,\vec E^{(\rho)}_1$ of $\vec H_{n,p'_1}$ with $(1-p'_1)^\rho=1-p_1$, so $p'_1\sim p_1/\rho=\frac{\e}{2r\rho}p_*$.
Given a vertex $v$ in an oriented hypergraph, its out-edges are the edges that have $v$ as a tail. Out-degrees and other related notions are defined the obvious way. We say $v$ is {\em good} if its out-degree is at least $d^*$ in every $\vec E^{(1)}_1,\ldots,\vec E^{(\rho)}_1$. Otherwise we call vertex $v$ {\em bad}. 
\begin{lemma}\label{mind}
Let $H_1,H_2,\ldots,H_\r$ be independent copies of $\vec H_{n,\a p_*}$ for some $\a>0$. Let $V_\a$ denote the set of vertices of degree less than $d^*$ in at least one of the $L_i$. Then $\E(|V_\a|)\leq n^{\th_\a}$ where $\th_\a=\e^2(\log1/\e^2+1+\log\a)- \a$.
 \end{lemma}
 \begin{proof}
 Let $p_\a=\a p_*$. Then, 
\begin{align*}
\\E(|V_\a|)&\leq n\r\sum_{k=0}^{d^*}\binom{\binom{n-1}{r-1}}{k}p_\a^k(1-p_\a)^{\binom{n-1}{r-1}-k}\\
&\leq 2n\r\bfrac{\binom{n-1}{r-1}p_\a e}{\e^2\log n}^{\e^2\log n}\exp\set{-\brac{\binom{n-1}{r-1}-k}p_\a}\\
&\leq 3n\r\bfrac{e\a}{\e^2}^{\e^2\log n}n^{-\a}\\
&=O(n^{1+\th_\a}).
\end{align*}
\end{proof}
Putting $\a=\e/(2r\r)$ in Lemma \ref{mind} and using the Markov inequality, we see that w.h.p.\ all but at most a $1/n^{\e'}$ fraction of the vertices are good, where $\e'=\e'(\e,r,\r)>0$. We call this event $\mathcal F_1$.

Now we will consider a hypergraph that contains all edges of type $1$, those edges of type $2$ with at most one bad vertex and those of type $3$ with more than one bad vertex. More precisely, let $V_G$ be the set of good vertices and $V_B=[n]\setminus V_G$ the set of bad ones. Let $E'_2 = \{e\in E_2: |e\cap V_B|\le 1\}$. We can orient each edge in $E'_2$ with exactly one bad vertex by making that bad vertex the tail, and edges with no bad vertices are oriented following any arbitrary deterministic rule. The resulting oriented hypergraph is denoted $\vec E'_2$.

Let $\mathcal F_2$ be the event that every bad vertex has degree at least $\rho d^*$ in $E'_2$ (or equivalently out-degree at least $\rho d^*$ in $\vec E'_2$). For good vertices we have already found enough edges in $E_1$ so we don't care about their degrees in $E_2$ or $E_3$. Putting $\a=1+\e/2$ in Lemma \ref{mind}, we see that $\th_\a<-1$ and so $V_\a=\es$ w.h.p and $\mathcal F_2$ holds. 

Then define $\vec E'_3 = \{e\in \vec E_3: |e \cap V_B|> 1\}$ and let $E'_3$ be the unoriented version of $\vec E'_3$ resulting from ignoring orientations. Edges in $E'_2$ appear with probability $p_2$ among those edges containing at most one bad vertex. Edges in $E'_3$ appear with probability $1-(1-p_3)^r=p_2$ among the other edges that are not elegible for $E'_2$. In other words, even though each one of $E'_2$ and $E'_3$ depend on the set of good vertices (and thus on $E_1$), $E'_2\cup E'_3$ is distributed as $H_{n,p_2}$ and is independent of $E_1$. Then $E_1\cup E'_2\cup E'_3$ is distributed as $H_{n, \tilde p_1+p_2-\tilde p_1p_2}$ which can be trivially included inside of $H_{n,p}$ since $\tilde p_1+p_2-\tilde p_1p_2\le \tilde p_1+p_2 = p$. Hence, all it remains to show is that in the event that $\mathcal F_1\cap \mathcal F_2$ holds, we can find $\rho$ independent copies of $\hat H_{n,d^*}$ (regarded as oriented) contained in $\vec E_1\cup \vec E'_2\cup \vec E'_3$, which after forgetting orientations we just showed is contained in $H_{n,p}$. We call such copies of $\hat H_{n,d^*}$ $K_1,\ldots,K_\rho$. If $\mathcal F_1\cap \mathcal F_2$ fails, just pick $K_1,\ldots,K_\rho$ independently from everything else so the coupling fails, but this occurs with probability $o(1)$.

Suppose we reveal the out-degrees of $\vec E^{(1)}_1,\ldots,\vec E^{(\rho)}_1$ and degrees of $E'_2$ (but nothing else) and suppose that $\mathcal F_1\cap \mathcal F_2$ holds. In particular, we know which vertices are good or bad.
For every vertex $v\in[n]$ and ``layer'' $i=1,\ldots,\rho$, we want to pick (with replacement) $d^*$ out-edges of $v$ uniformly at random and independently of all other choices. These will be the out-edges of $v$ in $K_i$. If $v$ is good, this is straightforward, since $v$ has out-degree $d_{v,i}\ge d^*$ in $E^{(i)}_1$ and all sets of $d_{v,i}$ out-neighbours are equally likely and independent of those of other vertices and layers. Incorporating replacement is also straightforward. (In other words, we are coupling $d^*$ random independent samples with replacement from a universe with a uniformly chosen random subset of size $d_{v,i}\ge d^*$.)

If $v$ is bad, picking its $d^*$ out-edges in each $K_i$ is trickier. We will be using edges in $\vec E'_2\cup \vec E'_3$ with tail $v$. In particular, we will only use edges in $\vec E'_2$ with exactly one bad vertex, and ignore those with no bad vertices. Recall that edges of type 2 were originally unoriented, but we oriented them in $\vec E'_2$ so that their only bad vertex (if they have one) is the tail. We will also use edges in $\vec E'_3$ that have tail $v$. Most bad vertices will not pick vertices from $\vec E'_3$, but some will. Note that the edge densities of $\vec E'_2$ and $\vec E'_3$ are different since each out-edge of $v$ with only one bad vertex appears in $\vec E'_2$ with probability $p_2$ (before conditioning on degrees of $E_2$) while each out-edge of $v$ with more bad vertices appears in $\vec E'_3$ with probability $p_3\sim p_2/r$. This is a delicate issue that complicates things and is possibly the crux of the argument. We cannot make the density of $\vec E'_2$ smaller a priori since we want to make sure that bad vertices have enough out-edges and we cannot make the density of $\vec E'_3$ larger since otherwise the coupling with $H_{n,p}$ would not work.

Let us proceed to do this carefully. Let $v$ be a bad vertex. Let $X=X_v$ be the set of all possible out-edges of $v$ in $\vec E'_2$, i.e.~edges with tail at $v$ and $r-1$ good vertices as heads. Note $x=|X|=\binom{|V_G|}{r-1}\sim \binom{n-1}{r-1}$. Let $Y=Y_v$
be the set of all possible out-edges of $v$ in $\vec E'_3$, i.e.~edges with tail at $v$ that contain at least one additional bad vertex. Clearly, $y=|Y| = \binom{n-1}{r-1} - x \sim \frac{|V_B|}{n}x \le n^{-\e'}x$ by $\mathcal F_1$.
Since we are also assuming that $\mathcal F_2$ holds, the out-degree of $v$ in $\vec E'_2$ is $s_v\ge \rho d^*$, so the set $S=S_v$ of out-edges of $v$ in $\vec E'_2$ is a random subset of $X$ of size $s_v\ge \rho d^*$. Let $S'$ be a random subset of $S$ of size exactly $\rho d^*$ (which is still uniformly chosen among all subsets of $X$ of that size). Also, let $T=T_v$ be the set of out-edges of $v$ in $\vec E'_3$. Now $t=|T|$ is distributed as $\Bin(y,p_3)$. Our goal is to find a coupling $U\subseteq S'\cup T$ where $U=\{u_1,\ldots,u_{\rho d^*}\}$ is obtained by uniformly and independently sampling $\rho d^*$ random elements of  $X\cup Y$ (where recall $X\cup Y$ is the set of all possible out-edges of $v$) with replacement (i.e.~$|U|\le\rho d^*$ and could be strictly smaller if the same edge is sampled twice). 

To do this, for each $i=1,\ldots,\rho d^*$, let $a_i$ be an integer in $[x+y]= \left[ \binom{n-1}{r-1} \right]$ choosen uniformly at random (and independent of other choices). We can think of the edges in $X\cup Y$ as having distinct labels in $[x+y]$ so we pick $u_i$ by first picking its label $a_i$, and later revealing what edge corresponds to that label. However, all we know so far is that edges in $Y$ have labels in $[y]$ and edges in $X$ have labels in $[x+y]\setminus[y]$. The label of each individual edge is still undecided. The reason to do this is to see: 1) how many different edges are in $U$ after possible edge repetitions, and 2) how many of these fall in $X$ or $Y$.
If $a_i\le y$, that means that $u_i$ will land in $Y$ (which has probability $y/(x+y)\le n^{-\e'}$). Otherwise, it lands in $X$. Let $A=\{a_1,\ldots,a_{\rho d^*}\}$, $A_X=A\setminus [y]$ and $A_Y=A\cap [y]$. Then $u=|U|=|A|$, $u_X=|U\cap X|=|A_X|$ and $u_Y=|U\cap Y|=|A_Y|$. Let $\ell$ be the number of indices $i=1,\ldots,\rho d^*$ such that $a_i\in[y]$ (or equivalently $u_i\in Y$). Clearly, $\ell$ has distribuition $\Bin(\rho d^*,y/(x+y))$ and moreover $u_Y\le \ell$ by construction. If $u_Y\le t$, then we can assign the $u_Y$ labels in $A_Y$ to a random set of edges in $T$. Similarly, we can always assign the $u_X\le\rho d^*=|S'|$ labels in $A_X$ to a random set of edges in $S'$. Doing so, the $u_1,\ldots,u_{\rho d^*}$ are i.i.d.~uniformly chosen from $X\cup Y$ as desired. For all of this to work, it is enough to find a coupling in which $\ell\le t$, or in other words $\Bin(\rho d^*,y/(x+y))\le\Bin(y,p_3)$. Ignoring $(1+o(1))$ factors in the parameters, all we need is a coupling
\begin{equation}\label{sillycoupling}
\Bin\left(\rho \e^2 \log n,1/n^{\e'}\right) \le \Bin\left(n^{-\e'}\binom{n-1}{ r-1}, \frac{1+\e/2}{r} \frac{\log n}{\binom{n-1}{ r-1}}\right).
\end{equation}
The coupling in~\eqref{sillycoupling} exists from the lemma below with $N=\rho \e^2 \log n$, $P=1/n^{\e'}$, $K= \frac{r}{1+\e/2} \frac{\binom{n-1}{r-1}}{n^{\e'}\log n}$ and $L = 
\frac{1+\e/2}{\e^2 r\rho}$.

Summarizing, for each bad vertex $v$ we were able to sample $u_1,\ldots,u_{\rho d^*}$ uniformly at random and independently from the set of all $\binom{n-1}{r-1}$ edges that contain $v$ (or oriented edges with tail at $v$) in a way that $u_1,\ldots,u_{\rho d^*}$ are also in $E'_2\cup E'_3$ (assuming $\mathcal F_1$ and $\mathcal F_2$) and thus in $H_{n,p}$. Then for each $i=1,\ldots,\rho$, we just pick the edges from $v$ in $K_i$ to be $u_{1+(i-1)d^*},\ldots,u_{d^*+(i-1)d^*}$. This gives the desired coupling and completes the proof of the theorem.\qed

\begin{lemma}
Let $0\le P\le1/2$ and $N,K,L\in\mathbb N$ with $L\ge2$. Then there is a coupling
\[
\Bin(N,P) \le \Bin(LKN,P/K)
\]
\end{lemma}
\begin{proof}
Let $Y=\sum_{i=1}^N\sum_{j=1}^{LK}Y_{i,j}$ with $Y_{i,j}$ i.i.d.~$\text{Bernoulli}(P/K)$, so $Y$ is $\Bin(LKN,P/K)$. We can write $Y=\sum_{i=1}^N Y'_i$ where $Y'_i = \sum_{j=1}^{LK}Y_{i,j}$. Putting $Z = \sum_{i=1}^N Z_i$ with $Z_i=1_{Y'_i\ge1}$, we have that $Z_i\le Y'_i$ and thus $Z\le Y$. Note that $Z$ is distributed as $\Bin(N,P_0)$ with
\[
1-P_0 = (1-P/K)^{LK} \le e^{-LP} \le e^{-2P} \le 1-P
\]
(where we used that $L\ge 2$ and $0\le P\le1/2$). Since $P\le P_0$, we can easily find a $\Bin(N,P)$ random variable $X$ and a coupling such that $X\le Z$. As a result, $X\le Y$ as desired.
\end{proof}
\section{Shamir problem with a lower bound on minimum degree}\label{Sham}

It will be convenient to replace $\e$ by $2\e$ and work with $H_{n,p},p=(1+2\e)p_*$. This can be coupled to contain the union of $\r=\lfloor \r(r)\rfloor+1$ random hypergraphs $H_0\cup\bigcup_{i=1}^\r H_i$ where $H_0=H_{n,(1+\e)p_*}$ and $H_i=H_{n,\e m_*/\r}$ where $m_*=Np_*$. Next let $\wh\G_i$, $i=1,2,\ldots,\r$, be subhypergraphs of $H_0$ as in Theorem \ref{Hout} and let $\G_i=\wh\G_i\cup H_i,i=1,2,\ldots,\r$.
The aim of this section is to prove the following: 
\begin{theorem}\label{th5}
$\G_{1},\G_2,\ldots,\G_\r$ contain (independent uniform) perfect matchings (assuming that $r$ divides $n$), w.h.p.
\end{theorem}
Theorem \ref{th1} will follow from Theorem \ref{th5}. Symmetry implies that the perfect matchings in the $\G_i$ are uniform and independence follows from the construction and Theorem \ref{Hout}. We apply Theorem \ref{lcr} with the  matchings in $\G_1,\G_2,\ldots,\G_\r$  to obtain Theorem \ref{th1}, under the restriction on $n$ that $r(r-1)\mid n$. This restriction will be removed in Section \ref{div}. (We are implicitly assuming that the union of the matchings is contiguous to $H_{n,d}$. This follows from \cite{CFR}.)

We continue to the proof of Theorem \ref{th5} and concentrate on $\G_1$. Put quite simply, our proof is a minor  modification of Kahn's proof of Theorem \ref{K1} giving the correct constant in Shamir's problem, see \cite{K1}. We let $e_1,e_2,\ldots,e_N$ be a random permutation of $\cK$. Then for $t\geq 1$, we let $R_t=\set{e_{t+1},e_{t+2},\ldots,e_N}$. We let $\cH_t$ denote the hypergraph with edge-set $E_1\cup R_t$ where $E_1=E(\wh \G_1)$. Thus, we begin with $\cH_0=\cK$ and in a step of the process we obtain $\cH_{t+1}$ from $\cH_t$ by deleting the edge $e_t$ from $R_t$. We leave $E_1$ untouched by this process, so that $m_t=|\cH_t|\sim N-t+nd^*t/N$ w.h.p.\ and $m_t\in [N-t,N-t+nd^*]$. We continue for $T=N-\e n\log n$ steps. Thus the edges of $\G_1=\cH_T$ are $E_1\cup R_T$. We let $E_{1,v}$ denote the $d^*$ choices of edge by vertex $v$ for $E_1$.

In principle, we could just write out Kahn's proof and make a few changes. This is a difficult proof and it would require about another 20 pages. This does not seem necessary and instead, we will give an outline of Kahn's proof and indicate the places where we need to make a change. 

\paragraph{Some notation:} The degree $d_{\cH_t}(v)=d_t(v)$ is the number of edges of $\cH_t$ containing $v$ and the co-degree $d_{\cH_t}(v,w)=d_t(v,w)$ of $v,w$ is the number of edges in $\cH_t$ containing  $v,w$. We let $\cH_t-Z$ denote the subgraph of $\cH_t$ induced by $[n]\sm Z$ where $Z\in \cK$. 
Generally speaking, from now on, we use Fraktur fonts for events, e.g. $\cA,\cB$ etc. and calligraphic fonts for hypergraphs (and sets of edges) e.g. $\cG,\cH$ etc. (By and large we use the notation of \cite{K1}.)

We use the following Chernoff bounds throughout the paper:
\begin{align}
\Pr(|\Bin(n,p)-np|\geq \e np)\leq 2e^{-\e^2np/3}.\label{cher1}\\
\Pr(\Bin(n,p)\geq \a np)\leq \bfrac{e}{\a}^{\a np}.\label{cher2}
\end{align}
These also hold for the hypergeometric distribution $\Hyp(M,K,n)$, where $M$ is the population size, $K$ is the number of successes in the population and $n$ is the number of draws. Then we claim that \eqref{cher1}, \eqref{cher2} hold with $\Bin(n,p)$ replaced by $\Hyp(M,K,n)$ under the assumption that $K/M=p$, see for example Hoeffding \cite{Hoe}, Theorem 4. 

Kahn \cite{K1} defines the following events: for a hypergraph $\cH$, we let $\F(\cH)$ denote the number of perfect matchings of $\cH$. 
 \begin{align*}
 \cA_t=&\set{\log\F(\cH_t)>\log \F(\cH_0)-\sum_{i=1}^t\g_i-o(n)}\quad\text{where $\g_i=\frac{n}{rm_i}$.}\\
 \cR_t=&\{d_t(v)\sim r|\cH_t|/n(\sim D_t=rm_t/n)\text{ for a.e. }v,\,d_t(v,w)=o(D_t)\text{ for all }v,w,\,\D_t=O(D_t),\d_t=\Omega(D_t)\}\\
 &\text{where $\D,\d$ denote maximum and minimum degrees.}\\ \\ 
 \cB_t=& \{\mxr\ \bw_{t}=O(1)\}\\
&\text{where $\bw_{t}(S)=\bw_{\cH_t}(S)=\F(\cH_t-S)$ for $S\subseteq \cK$, $\bbw_t=N^{-1}\sum_{S\in \binom{[n]}{r}}\bw_t(S)$}\\
&\text{and $\mxr\ \bw_t=\frac{\max_{S\subseteq V_t} \bw_t(S)}{\bbw_t}$.}
 \end{align*}
Kahn then writes 
\beq{events}{
\Pr\brac{\bigcup_{t\leq T}\bar \cA_t}\leq \Pr\brac{\bigcup_{t\leq T}\bar \cR_t}+\sum_{t<T}\Pr(\cA_t\cR_t\bar\cB_t)+\sum_{t\leq T}\Pr\brac{\brac{\bigcap_{i<t}\cB_i}\cap \bar\cA_t},
}
where $T=N-\b n\log n$ as defined above. Of course,it suffices to show is that $\cA_T$ occurs w.h.p. 

\subsection{Property $\cR_t$}
Fix a vertex $v$. Suppose first that $m_t\geq n^{3/2}$. In this case $rm_t/n\gg d^*$ and so $D_t\sim rm_t/n$. A vertex in $\cH_t$ has a degree in $R_t$ that is hypergeometrically distributed with mean $rm_t/n\geq rn^{1/2}$. The Chernoff bound \eqref{cher1} implies that w.h.p.\ every $v\in[n]$ has degree asymptotic to $rm_t/n$. Thus $d_t(v)\sim D_t$ for all $v\in [n]$ and also that $\D_t=O(D_t),\d_t=\Omega(D_t)$. The co-degree of $v,w$ is bounded by $d^*=o(D_t)$ plus the co-degree of $v,w$ in $R_t$. But the latter is distributed hypergeometrically with mean $O(m_tn^{-2})$. The Chernoff bounds imply that this is $o(D_t)$ q.s.\footnote{A sequence of events $\cE_n,n\geq 0$ occurs quite surely (q.s.) if $\Pr(\cE_n)\geq 1-O(n^{-K})$ for any constant $K$.} and this verifies $\cR_t$ for $t\leq N-2n^{3/2}$, say.

Now assume that $m_t< n^{3/2}$. Then $\E(|E_1\cap R_t|)=O(n\log n\times m_tn^{-r})=O(m_tn^{1-r}\log n)=o(1)$ and so $E_1\cap R_t=\es$ w.h.p.\ for $t>N+n^{3/2}$. ($E_1$ is unchanged and $R_t$ decreases as the construction proceeds.) The argument for degrees now follows that in the previous paragraph given that the degree of a vertex $v$ is its degree in $H_1$ plus its degree in $R_t$. The Chernoff bounds imply that $d_t(v)\in (1\pm \log^{-1/3}n)r(m_t/n+d^*)$ for all but $o(n)$ vertices. Thus, this time we only claim that $d_t(v)\sim D_t$ for almost all $v\in [n]$. We now check co-degrees. Fix $v,w$. Then, let $Z_1$ denote the number of edges in $E_{1}$ coming from $v,w$'s choices and let $Z_2$ be the number of edges containing $v,w$ due to choices of $x\neq v,w$. Then $Z_1$ is is bounded in distribution by the sum of two copies of $\Bin(d^*,\frac{r-1}{n-1})$ and $Z_2$ is bounded in distribution by $\Bin((n-2)d^*,\frac{(r-1)(r-2)}{(n-1)(n-2)})$. So
\[
\Pr(Z_1\geq 4)\leq \bfrac{d^*r}{n}^4\text{ and }\Pr(Z_2\geq 4)\leq \bfrac{r^2nd^*}{n^2}^4.
\]
Now let $\wh d_t(v,w)$ denote the co-degree of $v,w$ in $R_t$. Then, where $p_t=(N-t+1)/N$,
\beq{cov}{
\Pr(\exists v,w\ s.t.\ \wh d_t(v,w)\geq k)\leq \binom{n}{2}
\frac{\binom{\binom{n-2}{r-2}}{k}\binom{N-k}{N-t+1}}{\binom{N}{N-t+1}}\leq \binom{n}{2}\binom{\binom{n-2}{r-2}}{k}p_t^k\leq n^2\bfrac{r^2m_t}{kn^2}^k.
}
We put $k=3r$ and see that $d_t(v,w)=o(D_t)$ w.h.p., for all $v,w,t$. Thus  $\Pr\brac{\bigcup_{t\leq T}\bar \cR_t}=o(1)$.
 
 We finish this section with the observation that 
 \beq{H-Z}{
 \text{if $Z\in \cK$ and $\cH_t\in \cR_t$ then $\cH_t-Z\in \cR_t$.}
 }
To see this note that if $v\notin Z$ then its degree in $\cH_t-Z$ is at most $r\max_{v,w} d_t(v,w)$ less than it is in $\cH_t$.\\
(To avoid problems with applying \eqref{H-Z} repeatedly, we only apply this when $|\cH_t|\to\infty$.)
\subsection{Property $\cA_t$}
In this section we discuss the proof that
\[
\sum_{t\in I}\Pr\brac{\brac{\bigcap_{i<t}\cB_i}\cap \bar\cA_t}=o(1).
\]
The argument can be lifted from Section 3 of \cite{K1}. We have
\[
\log \F(\cH_t)=\begin{cases}\log \F(\cH_{t-1})&e_t\in E_1.\\ \log \F(\cH_{t-1})+\log(1-\xi_t)&e_t\notin E_1.\end{cases}
\]
Recall that $\cH_t=\cH_{t-1}$ if $e_t\in E_1$. Thus,
\[
\log\F(\cH_t)\geq \log\F(\cH_0)+\sum_{i=1}^t\log(1-\xi_i),
\]
where $\E(\xi_i)=\g_i$. After this there is a careful use of martingale tail inequalities whose validity relies on the occurence of the $\cB_i$ to show that we always have $\xi_i=O(\g_i)$. This gives us a concentration of $\sum_i\xi_i$ around its mean and also $\sum_i\xi_i^2=o(n)$. The argument in \cite{K1} remains valid in our case.
\subsection{Property $\cB_t$}
The aim here is to prove that
\beq{main}{
\sum_{t\in I}\Pr(\cA_t\cR_t\bar\cB_t)=o(1).
}
For this, Kahn \cite{K1} introduced several new events: in the following we have dropped the subscript $t$.  $D=D_{\cH}=r|\cH|/n$ is the average degree in $\cH=\cH_t$; $Z$ ranges over $\cK$ and $A$ ranges over the edges of $\cH$.
\begin{align*}
\cP_x&=\set{\bw_{\cH}((Z\sm x)\cup y)\gtrsim \bw_{\cH}(Z)d_\cH(x)/D\text{ for a.e. }y\in [n]\sm Z}.\\
\cC&=\set{x\in Z\text{ and }\bw_{\cH}(Z)\geq \F(\cH)e^{-o(n)}\text{ implies }\cP_x}.\\
\cE&=\set{\bw_\cH(A)\sim \F(\cH)/D\text{ for a.e. }A\in \cH}.\\
\cF&=\set{\bw_\cH(Z)\sim \F(\cH)/D\text{ for a.e. }Z\in \cK}.
\end{align*}
Kahn proves four lemmas: for events $\cE_1,\cE_2$, the notation $\cE_1\Rastar\cE_2$ means that $\Pr(\cE_1\cap \neg\cE_2)=n^{-\omega(1)}$. 
\begin{enumerate}[{\bf KLemm{a} 6.1}]
\item If $\cH$ satisfies $\cA,\cR$ then it satisfies $\cE$.
\item $\{\cH\text{ satisfies } \cA,\cR\} \Rastar \{\cH\text{ satisfies } \cF\}$.
\item For $x\in Z\in \binom{[n]}{2}$, \{$\cH\text{ satisfies } \cR\}\wedge \{\cH-Z\text{ satisfies } \cF\}\Rastar \{(\cH,Z,x)\text{ satisfies } \cP_x$\}.
\item If $\cH$ satisfies $\cR,\cF,\cC$ then it satisfies $\cB$.
\end{enumerate}
The proof of \eqref{main} follows easily from these lemmas. Given $\cA,\cR$, KLemma 6.1 implies that we have $\cE$. Then $\cH-Z$ satisfies $\cR$ (see \eqref{H-Z}) and if $\bw_{\cH}(Z)\geq \F(\cH) e^{-o(n)}$ then $\cH-Z$ satisfies $\cA$. Thus, given KLemma 6.2 we have that q.s., $\cH-Z$ satisfies $\cF$ whenever $\bw_\cH(Z)\geq \F(\cH) e^{-o(n)}$. Then, KLemma~6.3 gives us that $\cC$ holds q.s. KLemma 6.4 then implies that $\cB$ is satisfied and \eqref{main} follows. Property $\cP_x$ is key here. Starting with $Z$ maximizing $\bw_t$, it shows that $\mxr\bw_t=O( \bbw_t)$.

Now $\cH$ has a different distribution to that considered in \cite{K1}. But by looking at the proofs in \cite{K1} we shall see that this does not matter. Klemmas 6.1 and 6.4 are actually deterministic statements. It is only the proofs of KLemmas 6.2,6.3 that are sensitive to the distribution of $\cH$. Fortunately, our $\cH$ meets the necessary requirements. We briefly indicate the main points of the proofs of these lemmas to show why the change in distribution does not matter.
\paragraph{Klemma 6.1}
The proof in \cite{K1} begins with "Here $\cH$ is a general $m$-edge ($n$-vertex) $r$-graph satisfying $\cA,\cR$." Given $\cH$ satisfying $\cA,\cR$, Kahn first shows that $h(v,\cH)>\log d(v)-o(1)$ for a.e. vertex $v$, where $d(v)$ is the degree of $v$ in $\cH$ and $h(v,\cH)$ is the entropy of the distribution of the edge containing $v$ in a uniform random perfect matching of $\cH$. This shows that in a random p.m. $v$ is contained in a nearly random incident edge and $\cE$ quickly follows. 
\paragraph{Klemma 6.2}
This uses the technical assumption that  
\beq{tech}{
|\cK\sm\cH|\geq cN
}
 for some absolute constant $c>0$. Kahn points out that without this assumption $\cE$ and $\cF$ are equivalent.

Given $\cH$ satisfying $\cA,\cR$, let $\th$ be an arbitrary positive constant, let $\F'=\F(\cH)/D$ and let
\mults{
\cQ=\text{\{$\bw_{\cH}(A)\sim \Phi'$ for a.e. $A\in  \cH$,}\\
\text{ but $\bw_{\cH}(U)\neq (1\pm 2\theta) \Phi'$ for
at least a $(2\theta)$-fraction of the $U$'s in $ \cK\sm\cH$}\}.
}
Kahn reduces the lemma to showing that 
\beq{K100}{
\Pr(\cA\cR\cQ)<n^{-\om(1)}.
}
Let $\cT$ be chosen uniformly from $\binom{\cH}{\t}$ for a suitable $\log n\ll \t\ll \log^2n$ and let $\cG=\cH\sm\cT$. Let $\z=e^{-\t/D}$ and define the event 
\mults{
\cV=\{\bw_{\mathcal F}(A)\sim \z \F'\text{ for a.e. }A\in \cT, \\
\text{ but
$\bw_{\mathcal F}(U )\neq (1\pm \theta) \z \F'$ for
at least a $\theta$-fraction of the $U$'s in $ \cK\sm\cH$}\}.
}
Note that \eqref{K100} is implied by
\begin{align}
\Pr(\cV\mid \cA\cR\cQ)&=1-o(1).\label{85}\\
\Pr(\cV)&=n^{-\om(1)}.\label{86}
\end{align}
For equation \eqref{85} Kahn carefully chooses $\t$ and then observes "For the proof of \eqref{85} we choose $\cH$ and then $\cT$. We assume we have chosen $\cH$ satisfying $\cA,\cR,\cQ$; so $\Pr$ now refers just to the choice of $\cT$, and \eqref{85} will follow from $\Pr(\cH, \cT\mbox{ satisfies }\cV )=1- o(1)$".

For equation \eqref{86} we choose $\cG=\cH\sm\cT$ and then $\cT$. More precisely, we need to choose a disjoint pair $\cG,\cT$ such that $\cG\cup\cT$ is distributed as $\cH$ and $\cT$ is distributed as a uniform random subset of $\cG\cup\cT$. So we choose $\cH'$ with the same distribution as $\cH$, then choose a random subgraph $\cT'$ of $\cH'$ of size $\t$.  The hypergraph $\cH'\sm\cT'$ will be our $\cG$. Then we choose $\cT=\cT_1\cup\cT_2$ independently as follows: we choose $\cT_1$ of size $\Bin(\t,\a_1),\a_1=(N-t+1)/(N-t+1+d^*n)$ from $\cK\sm \cG$ and $\cT_2$ of size $\t-|\cT_1|$ from $E_1\sm\cG$. Note that $\a_1\geq \e/(\e+\e^2)\geq 0.99$ and so $|\cT_1|\geq 9\t/10$ q.s..

Given $\mathcal F$, let $U_1,U_2,\ldots,$ be an ordering of $\cK\sm\mathcal F$ with $\bw_{\mathcal F}(U_1)\leq \bw_{\mathcal F}(U_2)\leq \cdots$ and let $\cY,\cZ$ be the first and last $\th|\cK\sm\mathcal F|/3$ of the $U_i$'s. Now at least one of $\cY,\cZ$ is contained in $\cW=\set{U:\bw_{\mathcal F}(U)\neq (1\pm\th)\z\F'}$. Otherwise, using $\t\ll |\cK\sm\cH|$ (from \eqref{tech}), we have $|\cW\sm\cH|<|\cY|+|\cZ|<2\th(|\cK\sm\cH|+\t)/3<\th|\cK\sm\cH|$, contradicting the second part of $\cV$. 

 Now the first part of $\cV$ implies that $\min\set{|\cT\cap\cY|,|\cT\cap\cZ|}\leq \th\t/10$, say.  But $|\cT_1\cap \cY|$ is distributed as $\Bin(|\cT_1|,\th/3)$ and because $|\cT_1|\geq 9\t/10$ q.s. we have that  $\min\set{|\cT_1\cap \cY|,|\cT_1\cap \cZ|}\geq \th\t/4$ q.s. This contradiction implies that \eqref{86} holds.
\paragraph{Klemma 6.3}
Let $\cH'=\{A\in \cH:A\cap Z=\{x\}\}$ and $\cH''=\cH\sm \cH'$. Let $Y=Z\sm x$ and $W=[n]\sm Z$. Property $\cR$ for $\cH$ implies that
\begin{align}
D_{\mathcal G}&\sim D_\cH, \quad\text{where }\mathcal G=\cH-Z.\nn\\
d'(x)&= |\cH'| ~\brac{=\card{\set{S\in \binom{W}{r-1}: S\cup x\in \cH}}}  \sim d_{\cH}(x)=\Omega(D).\label{72}
\end{align}
We choose $\cH''$ first, which determines $\mathcal G$. We can assume \eqref{72} and 
\beq{73}{
\bw_{\mathcal G} (U )\sim \F'=\frac{\F(\mathcal G)}D\text{ for a.e. }U\in \binom{V\sm Z}{r}.
}
After this we choose $\cH'$ and KLemma 6.3 will follow from
\beq{74}{
\Pr(\cP_x\mid \eqref{72},\eqref{73})=1-n^{-\om(1)}.
}
From \eqref{73} we have that for a.e. $y\in W$, 
\beq{75}{
\bw_\cG(S\cup y)\sim \F'\text{ for a.e. }S\in \binom{W\sm y}{r-1}.
}
For a $y$ as in \eqref{75} and a random choice of $\cH'$ (as described in Section \ref{intro}), the Chernoff bounds imply that q.s., for all but $o(d'(x))$ sets $S$ such that $S\cup x\in \cH$, we have $\bw_\cG(Y\cup y)\sim \F'$. As this is another place where the distribution of $\cH$ matters, we go into a little more detail. So, given $y$,  let $\cS$ denote the collection of sets $S\in \binom{W\sm y}{r-1}$ such that \eqref{75} does not hold and let $\eta=|\cS|/\binom{n-r-1}{r-1}=o(1)$. Then the number $Z_1$ of edges in $R_1$ of the form $S\cup x\in \cH',S\in \cS$ is  dominated in distribution by $\Bin(N-t+1,r\eta/n)$ and so $Z_1=o(D)$ q.s. The same argument shows that the number $Z_2$ of edges in $E_{1,v}$ of the form $S\cup x\in \cH',S\in \cS$ satisfies $Z_3=o(D)$ q.s. Because $d'(x)=\Omega(D)$, we have $Z_1+Z_2=o(d'(x))$ as required.

Now
\beq{70}{
\text{if $y\in W$ then }\bw_{\cH}(Y\cup y)=\sum_{S\in \binom{W\sm y}{r-1},S\cup x\in\cH}\bw_\cG(S\cup y).
}
And so $\bw_\cH(Y\cup y)\gtrsim \F'd'(x)\sim \bw_\cH(Z)d_\cH(x)/D$.
\paragraph{Klemma 6.4}
We define the property
\[
\cD=\{\text{If $\bw_\cH(Z_0)>\F e^{-o(n)}$ then $\bw_\cH(Z)\gtrsim \bw_\cH(Z_0)D^{-r}\prod_{z\in Z_0}d_\cH(x)$ for a.e. $Z\in\cK$\}.}
\]
After this we assert:
\begin{align}
\cR\cC&\text{ implies }\cD.\label{67}\\
\cR\cD\cF&\text{ implies }\cB.\label{68}
\end{align}
If $Z_0=\{x_1,\ldots,x_r\}$ then we obtain \eqref{67} by induction on $i\in[r]$, for a.e. $y_1,\ldots,y_r\in [n]\sm Z_0$, we have with $Z_i=(Z_{i-1}\sm x_i)\cup y_i$,
\beq{69}{
\forall i \bw_\cH(Z_i)\gtrsim \bw_\cH(Z_{i-1})d_\cH(x_i)/D\gtrsim \bw_\cH(Z_0)D^{-i}\prod_{j\leq i}d_\cH(x_j).
}
(The only thing to observe here is that \eqref{69} for $Z_{i-1}$ implies $\bw_{\cH}(Z_i)\geq \F(\cH)e^{-o(n)}$, since $\d_\cH=\Omega(D)$ implies that the r.h.s.\ of
\eqref{69} is $\Omega(\bw_\cH(Z_0))$.) This gives $\cD$, since it implies that a.e.\ $Z\in \cK$ is $Z_r$ for some $y_1\dots y_r$ supporting \eqref{69}.


For \eqref{68} choose $Z_0\in \cK$ with $\bw_\cH(Z_0)$ maximum and note that $\bw_\cH(Z_0)>\F(\cH)e^{-o(n)}$. Thus $\cD$ (and $\d_\cH=\Omega(D)$) give $\bw_\cH(Z)=\Omega(\bw_\cH(Z_0))$ for a.e.\ $Z\in \cK$, which with $\cF$ implies $\Phi(\cH)/D=\Omega(\bw_\cH(Z_0))$.  But this gives $\cB$, since $\overline \bw_\cH(\cH)=\Phi(\cH)/D$. 
\section{Dealing with the divisibility problem}\label{div}
We use an idea of Ferber \cite{Fe}. Suppose that $n=(r-1)(r\hat n+k)$ where $0\leq k<r$. We can use $k=0$ as the base case for an inductive proof. So, suppose that $\cX_k$ is the statement that $H_{n,m}$ contains a loose hamilton cycle w.h.p.\ when $(r-1)\mid n$ and $n/(r-1) \equiv k \mod r$ and $m\geq \frac{(1+\e)n\log n}{r}$ for any fixed $\e>0$. Assume then that $\cX_k$ is true. We will show using Ferber's idea that if $n=(r-1)(r\hat n+k+1)$ then there exists a positive constant $\a$ such that if $m\geq \frac{(1+\e)n\log n}{r}$ then $H_{n,m}$ contains a loose Hamilton cycle with probability at least $\alpha$. (In fact, we will show that $\alpha=1/(2r)$ works.) After this, we can use a result of Friedgut \cite{F05} that says that there are no coarse thresholds with $m\sim n\log n$. In particular, we apply the second remark following Theorem 2.1 of that paper to obtain w.h.p. 

Following Ferber, we let $p=m/N$ and then let $p_1=\log n/N$ and define $p_2$ by $1-p=(1-p_1)(1-p_2)$. Fix an arbitrary edge $e=\set{x_1,x_2,\ldots,x_k}\in E(H(n,p_1))$ and note that such an edge exists w.h.p. Now let $V^*=([n]\sm e)\cup \set{e^*}$, where $e^*$ is a new vertex. Note that $|V^*|= n - r+1 =(r-1)(r\hat n+k)$. Now consider the hypergraph $H^*$ with vertex set $V^*$ derived from $H_{n,p_2}$ by replacing an $x_i$ in any edge by $e^*$ and accepting it with probability $\frac{1-(1-p_2)^{1/r}}{p_2} \sim  1/r$. The edge density of $H^*$ is such that we can apply the induction hypothesis to argue that $H^*$ contains a loose Hamilton cycle $C^*$ w.h.p. Now by symmetry, there is a probability of at least $1/r$ that $\set{e^*}=e_1^*\cap e_2^*$ for two consecutive edges of $C^*$. Suppose that $e_i^*$, $i=1,2$, are derived from $e_1,e_2$ of $H_{n,p_2}$. Given this there is a further probability of at least $1-1/r\geq 1/2$ that we can replace $e_1^*,e_2^*$ by $e_1,e,e_2$ to obtain a loose Hamilton cycle in $H_{n,p_2}$.
\end{proof}
\section{Final remarks}
Kahn \cite{K2} also proved a hitting time version for perfect matchings. It is not obvious how to tighten the current approach to get a hitting time version for loose hamilton cycles. This is because we need a minimum degree of $\r>2$ to be able to apply Theorem \ref{lcr}.

\end{document}